\newcommand\CC{{\mathbb C}}
\newcommand\DD{{\mathbb D}}
\newcommand\TT{{\mathbb T}}
\def\beq{\begin{equation}}
\def\eeq{\end{equation}}
\newtheorem{thm}{Theorem}[section]
\newtheorem{lem}[thm]{Lemma}
\newtheorem{cor}[thm]{Corollary}
\newtheorem{rem}[thm]{Remark}
\newcommand\beginpf{\noindent {\bf Proof:} \quad}
\newcommand\re{\mathop{\rm Re}\nolimits}
\def\tl{\tilde\ell}
\def\beginpf{\begin{proof}}
\def\endpf{\end{proof}}
\renewcommand\phi{\varphi}
\newcommand{{\centering\input{.pstex_t}}}[1]{{\centering\input{#1.pstex_t}}}
\begin{document}
\title[Lower bounds of weighted composition operators]{The reproducing kernel thesis for lower bounds of weighted composition operators}

\author{I. Chalendar}
\address{Isabelle Chalendar, Universit\'e Paris Est, 
LAMA, (UMR 8050), UPEM, UPEC, CNRS,
F-77454, Marne-la-Vall\'ee (France)}
\email{isabelle.chalendar@u-pem.fr}

\author{J.R.Partington}
\address{Jonathan R. Partington, School of Mathematics, University of Leeds, Leeds LS2 9JT, UK}
 \email{J.R.Partington@leeds.ac.uk}
 
 \subjclass[2010]{47B33, 30H10, 32A36, 47B32}
 
 \keywords{reproducing kernel, weighted composition operator, reverse Carleson measure, Hardy space, Bergman space, test functions} 
\baselineskip18pt

\bibliographystyle{plain}

\begin{abstract}
It is shown that the property of being bounded below (having closed range) of
weighted composition operators on Hardy and Bergman spaces can be tested
by their action on a set of simple test functions, including reproducing kernels.
The methods used in the analysis are based on the theory of reverse Carleson embeddings.
\end{abstract}

 \maketitle

\section{Introduction and notation}

The reproducing kernel thesis is a term commonly used to describe a body of
results that assert that the boundedness of various operators on function spaces
such as the Hardy and Bergman space can be tested by their action on
reproducing kernels: this is known to apply to Hankel operators, Toeplitz
operators, Carleson embeddings, and -- our main concern here -- weighted composition
operators, although not to the adjoints of weighted composition operators (see \cite{zen}, also \cite{CZ07,GKP10}).

Surprisingly, the reproducing kernels may also be used in some circumstances as
test functions for boundedness below of certain operators, and this is the theme of
this paper. We give necessary and sufficient conditions for a bounded weighted
composition operator $W$ on a Hardy space $H^p$ or a Bergman space $A^p$ to
be bounded below, that is, for there to exist a constant $\delta >0$ such that
$\|Wf\| \ge \delta\|f\|$ for all $f$ in the space. Assuming that the operator is injective,
which it is except in trivial cases, this property is equivalent to the property that $W$
has closed range. Some questions remain open in the case of Bergman spaces,
particularly for the case $p=2$ if the weight has infinitely many zeros.

The results of this paper generalize in various directions
earlier work from \cite{CTW,gp17,gt15,kumar,luery}, as will be seen below.

We write $m$ for normalized Lebesgue measure on $\TT$ and $A$ for normalized
area measure in $\DD$.

\section{Hardy spaces}

For $h \in H^p=H^p(\DD)$ and $\psi: \DD \to \DD$ holomorphic 
we define the weighted composition operator $W_{h,\psi}$ on $H^p$
by
\[
W_{h,\psi}f(z)=h(z) f(\psi(z)), \qquad (z \in \DD).
\]
If $h \in H^\infty$, then $W_{h,\psi}$ is automatically bounded,
by Littlewood's subordination theorem, but this is not a necessary condition
for boundedness.

For $\lambda\in \DD$, we write $k_\lambda$ for the reproducing kernel function
$ z \mapsto 1/(1-\overline\lambda z)$ for $z \in \DD$.
Using the (isomorphic) duality between $H^p$ and
$H^{p'}$, where $p'=p/(p-1)$ is the conjugate index to $p$
\cite[A5.7.8]{nik}, we see that there are constants $A,B>0$ independent of $\lambda$ such that
for $1<p<\infty$ we have 
\[
A \|\delta_\lambda\|_{(H^{p'})^*}  \le \|k_\lambda\|_p \le B \|\delta_\lambda\|_{(H^{p'})^*},
\]
where $\delta_\lambda$ is the functional $f \mapsto f(\lambda)$.
Now,   the standard inner--outer factorization shows that every $f \in H^{p'}$
can be written as $f = \theta u^{2/p'}$ for $\theta $ inner and  $u \in H^2$ outer, and conversely every
$g \in H^2$ can be written as $g=\phi v^{p'/2}$ for $\phi$ inner and $v \in H^{p'}$ outer.
We conclude easily that
\[
\| \delta_\lambda\|_{(H^{p'})^*} = \|\delta_\lambda\|_{H^2}^{2/p'} =(1-|\lambda|^2)^{-1/p'}.
\]

We write $\tilde k_\lambda=k_\lambda / \|k_\lambda\|_p$,
noting that this definition depends on $p$.

As in \cite{CH01} we define the measure $\mu_{h,\psi}$ on Borel
subsets of $\overline\DD$ by
\[
\mu_{h,\psi}(E)=\int_{\psi^{-1}(E) \cap \TT} |h|^p dm,
\]
where $m$ is normalized Lebesgue measure. In \cite{CH01} it is shown that
$W_{h,\psi}$ is bounded on $H^p$ if and only if $\mu_{h,\psi}$ is
a Carleson measure: this follows from the fact that
\beq\label{eq:normwf}
\|W_{h,\psi}f\|^p = \int_{\overline\DD} |f|^p \, d\mu_{h,\psi}
\eeq
for $f \in H^p$.

Cima, Thomson and Wogen \cite{CTW} showed that the composition
operator $C_\phi$ on $H^2$ (the case $h=1$, $p=2$) has closed range
if and only if the Radon--Nikodym derivative
\beq\label{eq:rnderiv}
g_{h,\psi}=\frac{d\mu_{h,\psi} }{dm}
\eeq
is essentially bounded away from $0$ on $\TT$. See also \cite{Z}
for another characterization. This was extended to $H^p$ by
Galanopoulos  and  Panteris \cite{gp17}.

In \cite{kumar}, the result was extended to weighted composition
operators on $H^2$. In fact a similar argument gives the full
result for $H^p$.

\begin{lem}\label{lem:aftercima}
Let $1<p<\infty$ and let $h \in H^p$ and $\psi: \DD \to \DD$ holomorphic 
be such that $W_{h,\psi}$ is bounded. Then $W_{h,\psi}$ is bounded
below if and only if $g_{h,\psi}$, as defined in \eqref{eq:rnderiv},
is essentially bounded away from $0$ on $\TT$.
\end{lem}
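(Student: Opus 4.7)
The proof has two directions.

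For the forward direction ($g_{h,\psi}$ bounded away from $0$ implies $W_{h,\psi}$ bounded below), apply formula \eqref{eq:normwf} and retain only the absolutely continuous boundary part of $\mu_{h,\psi}$: if $g_{h,\psi} \geq c > 0$ a.e., then
\[
\|W_{h,\psi} f\|^p = \int_{\overline\DD} |f|^p\, d\mu_{h,\psi} \geq \int_\TT |f|^p\, g_{h,\psi}\, dm \geq c\, \|f\|_p^p.
\]

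The converse requires a contrapositive test-function argument. Assuming $\mathrm{ess\,inf}_\TT g_{h,\psi} = 0$, the plan is to build a sequence of unit vectors $\phi_k \in H^p$ with $\|W_{h,\psi}\phi_k\|_p \to 0$. For each $k$, pick $\zeta_k \in \TT$ that is (i) a Lebesgue point of $g_{h,\psi}$ with $g_{h,\psi}(\zeta_k) < 1/k$ and (ii) a point of vanishing upper density for the singular part $\mu_s := \mu_{h,\psi}|_\TT - g_{h,\psi}\, dm$; since $\mu_s \perp m$, condition (ii) holds $m$-a.e., so both can be met simultaneously on a set of positive measure. Set $\lambda_k = r_k \zeta_k$ and $f_{\lambda_k}(z) := (1-|\lambda_k|^2)^{1/p}/(1-\bar\lambda_k z)^{2/p}$, and take
\[
\phi_k(z) = z^{n_k}\, f_{\lambda_k}(z),
\]
with $r_k \in (0,1)$ and $n_k \in \NN$ to be chosen in that order. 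On $\TT$ one has $|f_{\lambda_k}|^p = P_{\lambda_k}$ (the Poisson kernel at $\lambda_k$) and $|z^{n_k}| = 1$, so $\|\phi_k\|_p = 1$.

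Applying \eqref{eq:normwf} and decomposing $\mu_{h,\psi} = g_{h,\psi}\, dm + \mu_s + \mu_{h,\psi}|_\DD$ gives
\[
\|W_{h,\psi}\phi_k\|_p^p = \int_\TT P_{\lambda_k}\, g_{h,\psi}\, dm + \int_\TT P_{\lambda_k}\, d\mu_s + \int_\DD |z|^{n_k p}\, |f_{\lambda_k}(z)|^p\, d\mu_{h,\psi}|_\DD(z).
\]
As $r_k \to 1$, the first term converges to $g_{h,\psi}(\zeta_k) < 1/k$ by the Lebesgue-point property of the Poisson integral, and the second to $0$ by the classical fact that Poisson integrals of singular measures vanish non-tangentially at points of zero symmetric derivative. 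Choosing $r_k$ close enough to $1$ thus makes their sum less than $2/k$.

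The main obstacle is the interior integral. Without the factor $z^{n_k}$, a point mass of $\mu_{h,\psi}|_\DD$ permitted by the Carleson condition, sitting at $\lambda_k$, could contribute an amount of order $1$ through the interior blow-up $|f_{\lambda_k}(\lambda_k)|^p = (1-|\lambda_k|^2)^{-1}$. The remedy is to fix $\lambda_k$ first and only then choose $n_k$ large. Since $\mu_{h,\psi}$ is Carleson (because $W_{h,\psi}$ is bounded), $|f_{\lambda_k}|^p \in L^1(\mu_{h,\psi}|_\DD)$ with $L^1$-norm at most $\|W_{h,\psi}\|^p$; combined with $|z|^{n_k p} \to 0$ pointwise on $\DD$, dominated convergence drives the third integral below $1/k$ for $n_k$ sufficiently large. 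Assembling the three estimates gives $\|W_{h,\psi}\phi_k\|_p^p < 3/k$, contradicting the assumption that $W_{h,\psi}$ is bounded below.
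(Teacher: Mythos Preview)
Your proof is correct but takes a different route from the paper's. The forward direction is the same. For the converse, the paper fixes $\epsilon>0$, chooses a set $E\subset\TT$ of positive measure with $\mu_{h,\psi}(E)<\epsilon\, m(E)$ (take $\{g_{h,\psi}<\epsilon\}$ and remove a null set carrying the singular part), builds an outer function $f\in H^p$ with $|f|=1$ on $E$ and $|f|=\tfrac12$ on $\TT\setminus E$, and considers the powers $f^n$: these satisfy $\|f^n\|_p\ge m(E)^{1/p}$, while $|f^n|\to 0$ pointwise on $\DD$ and on $\TT\setminus E$, so dominated convergence gives $\|W_{h,\psi}f^n\|_p^p\to\mu_{h,\psi}(E)<\epsilon\, m(E)$. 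Your argument instead localises at a single boundary point via the Poisson-kernel-shaped functions $f_{\lambda_k}$ and handles the interior mass with the factor $z^{n_k}$. The paper's version is shorter and more elementary: one outer function, no explicit Fatou-type boundary limits, no need to select a Lebesgue point or treat the singular part separately. On the other hand, your test functions are precisely the $\tl_\lambda$ introduced later in the paper, so your argument is much closer in spirit to a direct proof of Theorem~\ref{thm:othertests} and makes the link with reproducing-kernel-type functions explicit already at the level of the lemma.
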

\beginpf
It follows from \eqref{eq:normwf} that 
\[
\|W_{h,\psi}f\|^p \ge \int_\TT |f|^p g_{h,\psi} dm,
\]
and so $W_{h,\psi}$ is bounded below if $g_{h,\psi}$
is essentially bounded away from $0$. 

Conversely, if  $g_{h,\psi}$
is not essentially bounded away from $0$, then for each $\epsilon>0$ there is
a set $E \subset \TT$ such that $m(E)>0$ and
\[
\int_{\psi^{-1}(E) \cap \TT} |h|^p \, dm < \epsilon m(E).
\]
As in \cite[p. 24]{durenbook}, for example, there exists a function $f\in H^p$ such that
\[
|f(e^{i\theta})| =
\begin{cases}
1 & \hbox{if } e^{i\theta} \in E, \\
\frac12 & \hbox{if } e^{i\theta} \in \TT \setminus E.
\end{cases}
\]
Now for $n=1,2,\ldots$ we have $\|f^n\|_p \ge m(E)^{1/p}$ but 
$f^n \to 0$ pointwise on $\TT \setminus E$, so
\[
\limsup_{n \to \infty} \|W_{h,\psi}f^n\|_p^p = \int_{\psi^{-1}(E) \cap \TT} |h(e^{i\theta})|^p \, dm \le \epsilon m(E).
\]

\endpf

This leads to a reproducing kernel thesis for boundedness below on $H^p$.

\begin{thm}\label{thm:lplus}
Let $1<p<\infty$ and let $h \in H^p$ and $\psi: \DD \to \DD$ be holomorphic 
with $W_{h,\psi}$  bounded. The following assertions are
equivalent:\\
(i) $W_{h,\psi}$ is bounded below;\\
(ii) There exists $C>0$ such that 
$ \|W_{h,\psi}   k_\lambda \|_p \ge C \|k_\lambda\|_p$
for all $\lambda \in \DD$.
\end{thm}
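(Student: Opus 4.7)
The direction (i)$\Rightarrow$(ii) is immediate: applying the inequality $\|W_{h,\psi}f\|_p \ge \delta\|f\|_p$ to $f = k_\lambda$ yields (ii) with $C = \delta$. For the converse (ii)$\Rightarrow$(i), the plan is to invoke Lemma~\ref{lem:aftercima} and show that $g_{h,\psi}$ is essentially bounded below by $C^p$ on $\TT$. From \eqref{eq:normwf} I would split, for every $\lambda \in \DD$,
\[
\|W_{h,\psi}\tilde k_\lambda\|_p^p
 \;=\; \int_\TT |\tilde k_\lambda|^p\, g_{h,\psi}\, dm
 \;+\; \int_\DD |\tilde k_\lambda|^p\, d\mu_{h,\psi},
\]
and analyse the two pieces as $\lambda = re^{i\theta_0}$ tends to a boundary point radially.

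The boundary term is routine: the function $\theta \mapsto |\tilde k_{re^{i\theta_0}}(e^{i\theta})|^p$ is a non-negative, $L^1$-normalised, radially decreasing function of $\theta-\theta_0$ (an explicit power of the Poisson kernel divided by its $L^1$-norm), and a direct estimate shows it is dominated by $(1-r)^{-1}\phi(\cdot/(1-r))$ with $\phi(x)=(1+x^2)^{-p/2}\in L^1(\mathbb{R})$ since $p>1$. Thus $|\tilde k_\lambda|^p\,dm$ is a standard summability kernel as $r \to 1^-$, and the Lebesgue differentiation theorem (applied to $g_{h,\psi} \in L^1(\TT)$) yields
\[
\lim_{r\to 1^-}\int_\TT |\tilde k_{re^{i\theta_0}}|^p g_{h,\psi}\, dm = g_{h,\psi}(e^{i\theta_0})\qquad\text{for a.e.\ }\theta_0.
\]

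The interior term is the main obstacle: $|\tilde k_\lambda|^p$ blows up near $\lambda$, and the measure $\mu_{h,\psi}$ may well carry mass in $\DD$ arbitrarily close to $\TT$, so no pointwise estimate in $\theta_0$ is evident. The remedy is a Fubini averaging argument. Swapping integrals, using rotation invariance of $m$ and the estimate $\|k_\lambda\|_p \asymp (1-|\lambda|^2)^{-1/p'}$ established in the text, one gets
\[
\int_\TT \!\Big(\int_\DD |\tilde k_{re^{i\theta_0}}|^p d\mu_{h,\psi}\Big) dm(\theta_0)
 = \int_\DD \frac{\|k_{r|z|}\|_p^p}{\|k_r\|_p^p}\, d\mu_{h,\psi}(z)
 \;\asymp\; \int_\DD \Big(\tfrac{1-r^2}{1-r^2|z|^2}\Big)^{\!p-1}\! d\mu_{h,\psi}(z).
\]
The integrand on the right is bounded by $1$ and tends to $0$ for every $z \in \DD$; since $\mu_{h,\psi}(\DD) \le \|h\|_p^p < \infty$, dominated convergence forces this right-hand side to $0$ as $r \to 1^-$. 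Extracting a subsequence $r_n \to 1^-$ along which the $\theta_0$-integrand converges to $0$ for a.e.\ $\theta_0$, and combining with the boundary limit, I obtain
\[
g_{h,\psi}(e^{i\theta_0})\;=\;\lim_n \|W_{h,\psi}\tilde k_{r_n e^{i\theta_0}}\|_p^p \;\ge\; C^p
\]
for a.e.\ $\theta_0$ by hypothesis (ii). Hence $g_{h,\psi}$ is essentially bounded below by $C^p$, and Lemma~\ref{lem:aftercima} delivers (i).
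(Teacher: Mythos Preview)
Your argument is correct. The paper's proof, by contrast, is a one-line reduction: it invokes Theorem~2.1 of Hartmann--Massaneda--Nicolau--Ortega-Cerd\`a \cite{HMNO}, which asserts for any finite Carleson measure $\mu$ on $\overline\DD$ that the Radon--Nikodym derivative $d\mu/dm$ on $\TT$ is essentially bounded away from $0$ if and only if $\int_{\overline\DD}|\tilde k_\lambda|^p\,d\mu$ is bounded below in $\lambda$. Combined with \eqref{eq:normwf} and Lemma~\ref{lem:aftercima} this gives the theorem immediately.

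What you have done is essentially reprove, in a self-contained way, the direction of \cite{HMNO} that is actually needed here. Your approximate-identity argument for the boundary piece and the Fubini/averaging trick for the interior piece are exactly the natural tools, and the subsequence extraction cleanly handles the fact that the interior contribution need not vanish for \emph{every} $\theta_0$. The gain is that your proof is elementary and stays inside the paper; it also yields the explicit bound $g_{h,\psi}\ge C^p$ a.e. The paper's route is much shorter on the page and places the result in the broader reverse-Carleson framework, which is conceptually the right home for it and is reused in the Bergman-space section.
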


\beginpf
By Theorem 2.1 in \cite{HMNO}
(see also  \cite{FHR}), the function $g_{h,\psi}$ is essentially 
bounded away from $0$ if and only if
there is a constant $C>0$ such that 
\[
\int_{\overline \DD} |\tilde k_\lambda(z)|^p \, d\mu_{h,\psi}
\ge
C \qquad (\lambda \in \DD).
\]

Using \eqref{eq:normwf} and Lemma \ref{lem:aftercima} we have the result.
\endpf

For unweighted composition operators, and $p=2$, this
result may be found in the thesis of Luery \cite{luery}.

Now for $1 \le p < \infty$, let
$\tl_w \in H^p$ be defined by
\[
\tl_w (z)=\frac{(1-|w|^2)^{1/p}}{(1-\overline w z)^{2/p}},
\]
so that $\| \tl_w \|_{H^p} =1$ for all $w \in \DD$. We may use these
test functions for boundedness below of weighted composition
operators on $H^p$.

\begin{thm}\label{thm:othertests}
Let $1\leq p<\infty$,  $h \in H^p$, and $\psi: \DD \to \DD$ be
holomorphic   such that the weighted composition operator $W_{h,\psi}$ is
bounded. Then $W_{h,\psi}$ is bounded below if and only if there is a constant
$C>0$ such that $\|W_{h,\psi} \tl_w \| \ge C$ for all $w \in \DD$.
\end{thm}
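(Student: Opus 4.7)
The plan hinges on the key identity $|\tl_w(z)|^p = (1-|w|^2)/|1-\overline{w} z|^2 =: P_w(z)$, the Poisson kernel at $w$ evaluated at $z \in \overline{\DD}$. The necessity direction is then immediate: since $\|\tl_w\|_p = 1$ (as follows from integrating $P_w$ over $\TT$), if $W_{h,\psi}$ is bounded below by $\delta > 0$ then $\|W_{h,\psi}\tl_w\|_p \ge \delta$ for all $w \in \DD$, and one may take $C = \delta$.

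For the sufficiency, combining the identity $|\tl_w|^p = P_w$ with \eqref{eq:normwf} converts the hypothesis into the reverse Poisson-integral estimate
\[
\int_{\overline{\DD}} P_w(z)\, d\mu_{h,\psi}(z) \ge C^p \qquad (w \in \DD).
\]
I decompose $\mu_{h,\psi} = g_{h,\psi}\, dm + \sigma + \mu'$, where $g_{h,\psi}$ and $\sigma$ come from the Lebesgue decomposition of $\mu_{h,\psi}|_\TT$ with respect to $m$, and $\mu' := \mu_{h,\psi}|_\DD$ is the restriction to the open disk.

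The goal is to prove $g_{h,\psi} \ge C^p$ almost everywhere on $\TT$; once this holds, \eqref{eq:normwf} gives $\|W_{h,\psi}f\|_p^p \ge C^p \|f\|_p^p$ for every $f \in H^p$ (this half of the reasoning in Lemma \ref{lem:aftercima} works equally well for $p = 1$), so $W_{h,\psi}$ is bounded below. To get this pointwise lower bound I take $w = r\zeta$ and let $r \to 1^-$: the classical Fatou theorems give $P[g_{h,\psi}\, dm](r\zeta) \to g_{h,\psi}(\zeta)$ and $P\sigma(r\zeta) \to 0$ for almost every $\zeta \in \TT$, so the whole argument reduces to showing that $\liminf_{r \to 1^-} P\mu'(r\zeta) = 0$ almost everywhere.

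This last step is the main technicality, and I plan to handle it by a Fubini/Fatou computation. A direct calculation yields $\int_\TT P_{r\zeta}(z)\, dm(\zeta) = (1-r^2)/(1-r^2|z|^2)$ for every $z \in \DD$, so interchanging the order of integration gives
\[
\int_\TT P\mu'(r\zeta)\, dm(\zeta) = \int_\DD \frac{1-r^2}{1-r^2|z|^2}\, d\mu'(z),
\]
and the right-hand side tends to $0$ as $r \to 1^-$ by dominated convergence (the integrand is bounded by $1$ and vanishes pointwise because $\mu'$ is supported on the open disk). Fatou's lemma then forces $\liminf_{r \to 1^-} P\mu'(r\zeta) = 0$ almost everywhere, completing the argument. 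The identity $|\tl_w|^p = P_w$ is really the heart of the proof; the rest is a three-part decomposition of $\mu_{h,\psi}$ and routine boundary-behavior facts about Poisson integrals.
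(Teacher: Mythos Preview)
Your proof is correct and takes a genuinely different route from the paper's. The paper first reduces to outer $h$ via inner--outer factorization, writes $h^p=\tilde h^2$ with $\tilde h\in H^2$, observes that $|\tl_w|^p=|\tilde k_w|^2$ for the normalized $H^2$ kernels, and thereby transfers the test-function hypothesis to $H^2$, where it invokes Theorem~\ref{thm:lplus} (which rests on the reverse Carleson theorem of \cite{HMNO}) together with Lemma~\ref{lem:aftercima}. You instead exploit the identity $|\tl_w|^p=P_w$ directly: after decomposing $\mu_{h,\psi}$ into its absolutely continuous boundary part, its singular boundary part, and its interior part, you let $w=r\zeta\to\zeta$ radially and use the classical Fatou theorems on the first two pieces and a clean Fubini/Fatou-lemma argument on the third to obtain $g_{h,\psi}\ge C^p$ almost everywhere. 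Your approach is more self-contained---it needs no reduction to nonvanishing $h$, no passage through $p=2$, and no appeal to \cite{HMNO}---and in effect reproves the relevant direction of that reverse Carleson result from scratch. The paper's argument, by contrast, is shorter once Theorem~\ref{thm:lplus} is already in hand, and it makes explicit the link between the $\tl_w$ test functions and the $H^2$ reproducing-kernel thesis.
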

\beginpf
Without loss of generality, we may assume that $h$ is outer, and hence non-vanishing,
since if $h=\theta u$ with $\theta$ inner and $u$ outer, then the operator 
$W_{u,\psi}$ is bounded below if and only if $W_{h,\psi}$ is; a similar
observation applies to boundedness below on test functions.
The   condition on test functions may be written as 
\[
\int_\TT |h(z)|^p |\tl_w(\psi(z))|^p \, dm(z) \ge C^p,
\]
and since $h$ is non-vanishing we may write $h(z)^p = \tilde h(z)^2$, where $\tilde h \in H^2$.
Thus with $\tilde k_w=k_w/\|k_w\|_2$ we have
\[
\int_\TT |\tilde h(z)|^2 |\tilde k_w(\psi(z))|^2 \, dm(z) \ge C^p
\]
for all $w \in \DD$, and so the weighted composition
operator $W_{\tilde h, \psi}$ is bounded below on $H^2$, by Theorem  \ref{thm:lplus}.
It now follows from Lemma \ref{lem:aftercima} that $W_{h,\psi}$ is bounded below on $H^p$.
\endpf

As a corollary of Theorem \ref{thm:lplus} we have a reproducing kernel thesis for
boundedness below of composition operators $C_\Phi$
on the right half-plane $\CC_+$. Note that these operators
are not automatically  bounded, but an exact expression for
their norm is given in \cite{EJ12}.

By means of a unitary equivalence between $H^2(\DD)$ and $H^2(\CC_+)$
induced by the self-inverse M\"obius bijection $M(z)=(1-z)/(1+z)$, namely,
\[
(Vf)(s)=\frac{1}{\sqrt\pi(1+s)} f\left( \frac{1-s}{1+s} \right), \qquad f \in H^2(\DD), \quad s \in \CC_+,
\]
the composition operator $C_\Phi$ on $H^2(\CC_+)$ is seen to
be unitarily equivalent to the weighted composition operator $W_{h,\phi}$
on  $H^2(\DD)$, where $h(z)=\frac{1+\phi(z)}{1+z}$ and $\phi= M \circ \Phi \circ M$ (see \cite{CP03,kumar}).

Let $\tilde K_w$, given by 
\[
\tilde K_w(s)=\frac{(2\pi \re w)^{1/2}}{s+\overline w} ,
\]
denote the normalized reproducing kernel at $w \in \CC_+$. Since
$\langle F, Vk_\lambda \rangle = ( V^{-1}F)(\lambda)$ for $F \in H^2(\CC_+)$,
we conclude that $V \tilde k_\lambda = \tilde K_w$, where
$ w=M(\lambda)$, and hence obtain the following corollary, which can also be proved fairly
directly.

\begin{cor}\label{cor:h2cp}
If the composition operator $C_\Phi$ is bounded on $H^2(\CC_+)$,
then it is bounded below if and only if there is a constant $C>0$ such that
$\|C_\Phi \widetilde K_w \| \ge C$ for all $w \in \CC_+$.
\end{cor}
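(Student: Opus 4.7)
The plan is to transfer the statement back to the disk via the unitary equivalence $V: H^2(\DD) \to H^2(\CC_+)$ recalled in the paragraph preceding the corollary, and then apply Theorem \ref{thm:lplus} in the case $p=2$. The ingredients are already assembled: $C_\Phi$ on $H^2(\CC_+)$ is unitarily equivalent to $W_{h,\phi}$ on $H^2(\DD)$ with $h(z)=(1+\phi(z))/(1+z)$ and $\phi=M\circ\Phi\circ M$; and $V\tilde k_\lambda=\tilde K_w$ with $w=M(\lambda)$.

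First I would observe that, because $V$ is unitary and intertwines the two operators (i.e.\ $C_\Phi V = V W_{h,\phi}$), $C_\Phi$ is bounded below on $H^2(\CC_+)$ if and only if $W_{h,\phi}$ is bounded below on $H^2(\DD)$. Then I would apply Theorem \ref{thm:lplus} with $p=2$: the latter condition is equivalent to the existence of $C>0$ with $\|W_{h,\phi}\tilde k_\lambda\|_2\ge C$ for all $\lambda\in\DD$. Using the intertwining once more,
\[
\|W_{h,\phi}\tilde k_\lambda\|_2 \;=\; \|V W_{h,\phi}\tilde k_\lambda\|_2 \;=\; \|C_\Phi V \tilde k_\lambda\|_2 \;=\; \|C_\Phi \tilde K_w\|_2,
\]
where $w=M(\lambda)$.

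Finally, since $M$ is a bijection of $\DD$ onto $\CC_+$, the quantification over $\lambda\in\DD$ is equivalent to the quantification over $w\in\CC_+$, yielding the corollary. There is essentially no obstacle: the content lies entirely in Theorem \ref{thm:lplus}, while the corollary is a change of model; the only thing to be slightly careful about is checking that the unitary $V$ indeed intertwines $C_\Phi$ with $W_{h,\phi}$ in the right direction and that the normalized reproducing kernels correspond as stated, both of which are already noted in the text and can be verified by a direct computation using the formula for $V$.
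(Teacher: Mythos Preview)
Your proposal is correct and follows exactly the approach the paper indicates: the corollary is deduced from Theorem~\ref{thm:lplus} (case $p=2$) by transporting everything through the unitary $V$, using the intertwining $C_\Phi V = V W_{h,\phi}$ and the correspondence $V\tilde k_\lambda = \tilde K_{M(\lambda)}$ already recorded in the text. The paper does not spell out a proof beyond these preliminaries, so your write-up is in fact more detailed than the original.
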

Clearly similar results hold for weighted composition operators, and
also for $H^p(\CC_+)$ for other values of $p$.


Another easy corollary of the main theorem of \cite{HMNO} is
a reproducing kernel thesis for Toeplitz operators.
If $J: H^2 \to L^2(\TT, \mu)$ is a bounded (Carleson) embedding,
then it is clearly bounded below if and only if $J^*J$ is bounded
below as an operator on $H^2$ (consider $\langle J^*Jx,x\rangle$).
 
\begin{cor}
Suppose that $h \in L^\infty(\TT)$ with $h \ge 0$ a.e. Then
the Toeplitz operator $T_h:f \mapsto P_{H^2}( h.f)$ is bounded
below if and only if
it is bounded  
below on normalized reproducing kernels.
\end{cor}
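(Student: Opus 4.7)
The plan is to follow the suggestion preceding the statement: write $T_h = J^*J$ for a Carleson embedding and reduce boundedness below of $T_h$ to a reverse Carleson embedding statement of the same form already used in the proof of Theorem \ref{thm:lplus}. Since $h \ge 0$ belongs to $L^\infty(\TT)$, the measure $d\mu := h \, dm$ is a finite (hence Carleson) measure on $\TT$, and the inclusion $J: H^2 \to L^2(\TT, \mu)$, $Jf = f$, is a bounded embedding with $J^*J = T_h$; in particular $0 \le T_h \le \|h\|_\infty I$.

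One implication is trivial: if $T_h$ is bounded below, then in particular $\|T_h \tilde k_\lambda\|_2 \ge \delta$ for all $\lambda \in \DD$, since $\|\tilde k_\lambda\|_2 = 1$. For the converse, assume there exists $\delta > 0$ with $\|T_h \tilde k_\lambda\|_2 \ge \delta$ for all $\lambda \in \DD$. The key step is to convert this operator-norm bound into a quadratic-form bound, and this is where the positivity hypothesis $h \ge 0$ enters. From $0 \le T_h \le \|h\|_\infty I$ one deduces the operator inequality $T_h^2 \le \|h\|_\infty T_h$, and consequently
$$\delta^2 \le \|T_h \tilde k_\lambda\|_2^2 = \langle T_h^2 \tilde k_\lambda, \tilde k_\lambda \rangle \le \|h\|_\infty \, \langle T_h \tilde k_\lambda, \tilde k_\lambda \rangle = \|h\|_\infty \int_\TT |\tilde k_\lambda|^2 \, d\mu.$$

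The resulting uniform lower bound on $\int_\TT |\tilde k_\lambda|^2 \, d\mu$ is precisely the hypothesis of the reverse Carleson theorem of \cite{HMNO} already invoked in Theorem \ref{thm:lplus}, so there exists $\eta > 0$ with $\|Jf\|_{L^2(\mu)} \ge \eta \|f\|_{H^2}$ for every $f \in H^2$. Then $\langle T_h f, f\rangle = \|Jf\|^2_{L^2(\mu)} \ge \eta^2 \|f\|^2_{H^2}$, so $T_h \ge \eta^2 I$ and $T_h$ is bounded below. The only real subtlety is the passage from a reproducing kernel inequality on the operator $T_h$ to the corresponding inequality for the embedding $J$; self-adjointness and positivity of $T_h$ are used in an essential way through $T_h^2 \le \|h\|_\infty T_h$, so the sign assumption on $h$ cannot be dropped without modifying the argument.
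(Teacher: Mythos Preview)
Your argument is correct and follows essentially the same route as the paper: identify $T_h$ with $J^*J$ for the embedding $J:H^2\to L^2(\TT,h\,dm)$, reduce boundedness below of $T_h$ to a reverse Carleson inequality for $J$, and then invoke the reproducing kernel test from \cite{HMNO}. The paper's proof compresses this into a single sentence and does not spell out how one passes from the norm condition $\|T_h\tilde k_\lambda\|\ge\delta$ to the quadratic-form condition $\langle T_h\tilde k_\lambda,\tilde k_\lambda\rangle\ge C$ that HMNO actually tests; your use of the operator inequality $T_h^2\le\|h\|_\infty T_h$ (valid since $0\le T_h\le\|h\|_\infty I$) makes this step explicit and is exactly the right way to close that gap.
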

 \beginpf
 Let $\mu$ denote the measure with Radon--Nikodym derivative $h$, so
 that
 \[
\langle  J^*J f , g \rangle = \int_{\TT} f \overline g h \, dm = 
\langle T_h  f, g \rangle
\]
for $f,g \in H^2$. Since we can test $J^*J$ on normalized reproducing kernels,
by \cite{luery}, or indeed Theorem \ref{thm:lplus},
the result follows.
\endpf

\section{Bergman spaces}

We now consider weighted composition operators $W_{h,\psi}$
acting on the Bergman space $A^p=A^p(\DD)$.
Once again a measure is associated with such an operator,
this time $\mu_{h,\psi}^p$ defined on Borel subsets of the
disc by
\beq\label{eq:revap1}
\mu_{h,\psi}^p(E)= \int_{\psi^{-1}(E)} |h(z)|^p \, dA(z),
\eeq
and   we have
\beq\label{eq:revap2}
\|W_{h,\psi} f\|^p = \int_\DD |f(z)|^p \, d\mu_{h,\psi}^p(z).
\eeq
This is done in \cite[Lem.\ 3.1]{kumar} for the case $p=2$, but the
argument works for all $p$.
It follows that $W_{h,\psi}$ is bounded   and bounded below
if and only if  $\mu^p_{h,\psi} $ satisfies the Carleson and
reverse Carleson properties. The unweighted case of this
result for $p=2$ may be found in \cite{Z}.

We begin with the case $p=2$ and write $\mu_{h,\psi}$ for $\mu_{h,\psi}^2$
and $g_{h,\psi}$ for the
Radon--Nikodym derivative of  $\mu_{h,\psi}$. We have, using Corollary 1 of
\cite{luecking}, that $W_{h,\psi}$ is bounded  below 
if and only if there exist constants $\delta, C>0$ such that
\[
A (D \cap \{z \in \DD: g_{h,\psi} (z) > \delta \})
\ge C A(D \cap \DD)
\]
for all discs $D$ with centres on $\TT$. (See \cite[Thm. 3.1]{kumar}.)

More recently, Ghatage and  Tjani \cite{gt15} have analysed the unweighted
case by means of the Berezin transform: in our context we define
it by
\beq\label{eq:berezin}
\tilde \mu(h,\psi)(w)= \int_\DD |\tilde k_w (z)|^2 d\mu_{h,\psi}(w)
= \|W_{h,\psi}\tilde k_w\|^2,
\eeq
where now $\tilde k_w$ is the normalized Bergman kernel,
\[
\tilde k_w(z)= \frac{1-|w|^2}{(1-\overline w z)^2}, \qquad w,z \in \DD.
\]
The following theorem gives an extension of \cite{gt15} to weighted
composition operators.

\begin{thm}\label{thm:a2}
For a bounded weighted composition operator $W_{h,\psi}$ on
$A^2(\DD)$ the following conditions are equivalent:\\
(i) $W_{h,\psi}$ is bounded below;\\
(ii) $\mu_{h,\psi}$ satisfies the reverse Carleson condition;\\
(iii) $\tilde \mu(h,\psi)(w)$ is bounded away from zero; that is,
there is a constant $C>0$ such that
$\|W_{h,\psi} \tilde k_w \| \ge C$ for all $w$.
\end{thm}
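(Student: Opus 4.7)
The strategy is to establish the cycle (i) $\Leftrightarrow$ (ii) $\Rightarrow$ (iii) $\Rightarrow$ (ii), with only the last implication carrying real content. The equivalence (i) $\Leftrightarrow$ (ii) is tautological from \eqref{eq:revap2}: $W_{h,\psi}$ is bounded below on $A^2$ with constant $\delta$ iff $\int_\DD |f|^2 \, d\mu_{h,\psi} \ge \delta^2 \|f\|_{A^2}^2$ for all $f \in A^2$, which is by definition the reverse Carleson condition for $\mu_{h,\psi}$ on $A^2$. The implication (ii) $\Rightarrow$ (iii) follows by applying this inequality to the unit vectors $\tilde k_w$.

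For the substantive direction (iii) $\Rightarrow$ (ii), I would adapt the Ghatage--Tjani argument from \cite{gt15} to the weighted situation, treating $\mu_{h,\psi}$ abstractly as a positive Borel measure on $\DD$ that is Carleson (since $W_{h,\psi}$ is bounded) and whose Berezin transform $\tilde\mu(h,\psi)$ is bounded below (by hypothesis (iii)). The key step is to split the Berezin integral into a local and a tail piece,
\[
\tilde\mu(h,\psi)(w) = \int_{D_r(w)} |\tilde k_w|^2 \, d\mu_{h,\psi} + \int_{\DD \setminus D_r(w)} |\tilde k_w|^2 \, d\mu_{h,\psi},
\]
where $D_r(w)$ denotes the pseudo-hyperbolic disc of radius $r$ about $w$. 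The Carleson upper bound on $\mu_{h,\psi}$ controls the tail via a standard dyadic decomposition of $\DD \setminus D_r(w)$ into pseudo-hyperbolic annuli, showing it is $o(1)$ as $r \to 1$, uniformly in $w$. Combined with the lower bound $\tilde\mu(h,\psi)(w) \ge C$ and the pointwise estimate $|\tilde k_w(z)|^2 \asymp (1-|w|^2)^{-2}$ on $D_r(w)$, this yields a uniform mass estimate $\mu_{h,\psi}(D_r(w)) \ge c(1-|w|^2)^2 \asymp c\,A(D_r(w))$ for all $w \in \DD$, once $r$ is chosen sufficiently close to $1$.

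The passage from this pseudo-hyperbolic mass estimate back to the reverse Carleson condition on $A^2$ is then the content of Luecking's criterion (\cite[Cor.~1]{luecking}, recalled before the theorem), translating pseudo-hyperbolic discs at interior points $w$ into Euclidean discs centred on $\TT$. I expect the principal obstacle to be the uniform tail estimate: it must use the Carleson property of $\mu_{h,\psi}$ quantitatively (via the dyadic annular splitting and a summation of a geometric series in $r$), and this is the step where the weighted setting requires more care than the pure composition operator case of \cite{gt15}, because $|h|^2 \, dA$ replaces the area measure and need not be uniformly bounded above.
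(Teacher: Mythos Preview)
Your approach is correct and follows the same underlying route as the paper. The one point worth noting is that you are doing more work than necessary: Theorem~4.1 of \cite{gt15} is already stated for arbitrary positive Borel measures on $\DD$ (asserting that a measure is reverse Carleson for $A^2$ if and only if its Berezin transform is bounded away from zero), not merely for the pull-back measures of unweighted composition operators. Hence no ``adaptation to the weighted situation'' is required, and your worry that ``$|h|^2\,dA$ replaces the area measure and need not be uniformly bounded above'' is unfounded---the Ghatage--Tjani argument uses only the Carleson property of the measure, which you already have from the assumed boundedness of $W_{h,\psi}$. The paper accordingly just cites \cite[Lem.~3.1]{kumar} for (i)~$\Leftrightarrow$~(ii) and \cite[Thm.~4.1]{gt15} together with \eqref{eq:berezin} for (ii)~$\Leftrightarrow$~(iii).
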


\begin{proof}
The equivalence of (i) and (ii) is given in \cite[Lem.\ 3.1]{kumar};
the equivalence of (ii) and (iii) is contained in Theorem 4.1 of \cite{gt15},
which asserts that a measure satisfies the reverse Carleson condition if
and only if its Berezin transform is bounded away from $0$, together with
\eqref{eq:berezin}.
\end{proof}

\begin{rem}{\rm
In the case of weighted composition operators on weighted Bergman spaces $A^2_\alpha$, with 
$\alpha>-1$ and the
norm   given by
\[
\|f\|^2_{A^2_\alpha} = \int_\DD |f(z)|^2 (1-|z|^2)^\alpha \, dA(z),
\]
we still have the equivalence of (i) and (ii) in Theorem~\ref{thm:a2}, since the proof of Lemma 3.1 in
\cite{kumar} is easily seen to extend to this situation. However, at present we do not know
whether the equivalence with (iii) still holds.
}
\end{rem}

As with Corollary \ref{cor:h2cp} we may
obtain a corollary for  composition
operators on the Bergman space of $\CC_+$. We note that the norm of a bounded composition
operator on $A^2(\CC_+)$ is given in \cite{ew11}. In the following result $\widetilde K_w$ is the
normalized reproducing kernel for $A^2(\CC_+)$.

\begin{cor}
If the composition operator $C_\Phi$ is bounded on $A^2(\CC_+)$
then it is bounded below if and only if there is a constant $C>0$ such that
$\|C_\Phi \widetilde K_w \| \ge C$ for all $w \in \CC_+$.
\end{cor}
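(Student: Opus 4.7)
The plan is to reduce the corollary to Theorem~\ref{thm:a2}, in close analogy with the derivation of Corollary~\ref{cor:h2cp} from Theorem~\ref{thm:lplus}. The vehicle is again the self-inverse M\"obius bijection $M(z) = (1-z)/(1+z)$ between $\DD$ and $\CC_+$, but this time implemented by a unitary
\[
V: A^2(\DD) \to A^2(\CC_+), \qquad (Vf)(s) = \frac{c}{(1+s)^2}\, f\!\left(\frac{1-s}{1+s}\right),
\]
with $c$ chosen so that $\|Vf\|_{A^2(\CC_+)} = \|f\|_{A^2(\DD)}$. The extra factor $(1+s)^{-2}$, as compared with the Hardy-space unitary, reflects the Jacobian $|M'|^2$ appearing in the change of variables for area measure.

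First I would verify that $V^{-1}C_\Phi V$ is itself a weighted composition operator on $A^2(\DD)$. Setting $\psi = M\circ \Phi\circ M$, a self-map of $\DD$, and using the identity $1+\Phi(M(z)) = 2/(1+\psi(z))$ which follows from $M\circ M=\mathrm{Id}$, a direct calculation yields
\[
V^{-1}C_\Phi V = W_{h,\psi}, \qquad h(z) = \left(\frac{1+\psi(z)}{1+z}\right)^{\!2}.
\]
In particular, boundedness (resp.\ boundedness below) of $C_\Phi$ on $A^2(\CC_+)$ is equivalent to boundedness (resp.\ boundedness below) of $W_{h,\psi}$ on $A^2(\DD)$.

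Next I would identify the action of $V$ on reproducing kernels. For $F\in A^2(\CC_+)$, the relation $\langle F, Vk_\lambda\rangle_{A^2(\CC_+)} = \langle V^{-1}F, k_\lambda\rangle_{A^2(\DD)} = (V^{-1}F)(\lambda)$, combined with the explicit formula for $V^{-1}$, shows that $Vk_\lambda$ is a nonzero scalar multiple of the reproducing kernel $K_{M(\lambda)}$ of $A^2(\CC_+)$. Since $V$ is unitary, passing to the normalizations gives $V\widetilde k_\lambda = \gamma(\lambda)\widetilde K_{M(\lambda)}$ for a unimodular $\gamma(\lambda)$, so that
\[
\|W_{h,\psi}\widetilde k_\lambda\|_{A^2(\DD)} = \|C_\Phi \widetilde K_{M(\lambda)}\|_{A^2(\CC_+)},
\]
and $w=M(\lambda)$ ranges over all of $\CC_+$ as $\lambda$ ranges over $\DD$.

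Combining these two ingredients with the equivalence of (i) and (iii) in Theorem~\ref{thm:a2} applied to $W_{h,\psi}$ yields the corollary at once. The principal obstacle is the slightly tedious change-of-variables calculation that verifies $V^{-1}C_\Phi V = W_{h,\psi}$ with the stated $h$; everything else is formal, and the argument mirrors the Hardy-space case already treated in the paper.
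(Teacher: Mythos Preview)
Your proposal is correct and follows precisely the approach the paper intends: it omits the proof, noting only that it is ``very similar to the proof of Corollary~\ref{cor:h2cp}'', and your argument is exactly the Bergman-space analogue of that proof, with the appropriate adjustments to the unitary $V$ (the $(1+s)^{-2}$ weight) and to $h$ (the square of the Hardy-space weight). The details you supply --- the identification of $V^{-1}C_\Phi V$ with $W_{h,\psi}$ and the correspondence of normalized reproducing kernels under $V$ --- are accurate and are what the paper leaves to the reader.
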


Since the proof is very similar to the proof of Corollary \ref{cor:h2cp}, we omit it.\\

Now for $1 \le p < \infty$, let
$\tl_w \in A^p$ be defined by
\[
\tl_w (z)=\frac{(1-|w|^2)^{2/p}}{(1-\overline w z)^{4/p}},
\]
so that $\| \tl_w \|_{A^p} =1$ for all $w \in \DD$. We may use these
test functions for boundedness below of weighted composition
operators on $A^p$. The following theorem corresponds to
Theorem \ref{thm:othertests} for the Hardy space, but requires a supplementary condition
on $h$, as we do not have a suitable inner--outer factorization available.

\begin{thm}\label{thm:finmanyzeros}
Let $1\leq p<\infty$,  $h \in A^p$ with at most finitely-many zeros, and $\psi: \DD \to \DD$
holomorphic such that the weighted composition operator $W_{h,\psi}$ is
bounded. Then $W_{h,\psi}$ is bounded below if and only if there is a constant
$C>0$ such that $\|W_{h,\psi} \tl_w \| \ge C$ for all $w \in \DD$.
\end{thm}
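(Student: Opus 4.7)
Since $\|\tl_w\|_{A^p}=1$, one implication is trivial: boundedness below of $W_{h,\psi}$ immediately forces the test-function bound. For the converse, my plan is to mirror the proof of Theorem~\ref{thm:othertests} by transferring the problem from $A^p$ to $A^2$ via the power $h^{p/2}$. Because this power is single-valued holomorphic only when $h$ is zero-free, the finite-zeros hypothesis enters precisely to allow the reduction to that case.

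\emph{Step 1: reduction to non-vanishing $h$.} Factor $h=Bh_1$, where $B$ is the finite Blaschke product carrying the zeros of $h$ and $h_1\in A^p$ is zero-free. Since $B$ has finitely many zeros, $|B(z)|\to 1$ uniformly as $|z|\to 1$; in particular, for any $\epsilon>0$ the set $\{|B|<\epsilon\}$ is a compact subset of $\DD$ whose area tends to $0$ as $\epsilon\to 0$, and $\psi$ sends it into a compact subset of $\DD$. The plan is to split integrals of the form $\int_\DD |h|^p|F|^p\,dA$ versus $\int_\DD |h_1|^p|F|^p\,dA$ (with $F=\tl_w\circ\psi$ or $F=f\circ\psi$) into $\{|B|\ge\epsilon\}$, where $1\le|h_1|^p/|h|^p\le\epsilon^{-p}$, and $\{|B|<\epsilon\}$, where $h_1$ is bounded and the standard pointwise bound $|f(\zeta)|\le C\|f\|_{A^p}(1-|\zeta|)^{-2/p}$ controls $|f\circ\psi|$. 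For $\epsilon$ sufficiently small, this should yield the equivalence of both the test-function condition and boundedness-below for $W_{h,\psi}$ and $W_{h_1,\psi}$. From this point on we may assume $h$ is zero-free.

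\emph{Step 2: transfer to $A^2$.} Set $\tilde h:=h^{p/2}$, which is a well-defined element of $A^2$ satisfying $|\tilde h|^2=|h|^p$; hence $\mu^2_{\tilde h,\psi}=\mu^p_{h,\psi}$ as measures on $\DD$. A direct computation from the explicit formulas gives
\[
|\tl_w(z)|^p=\frac{(1-|w|^2)^2}{|1-\overline w z|^4}=|\tilde k_w(z)|^2,
\]
where $\tilde k_w$ is the normalized $A^2$-reproducing kernel. The hypothesis therefore reads $\|W_{\tilde h,\psi}\tilde k_w\|_{A^2}^2\ge C^p$ for all $w\in\DD$. Since $W_{h,\psi}$ is bounded on $A^p$, the measure $\mu^p_{h,\psi}=\mu^2_{\tilde h,\psi}$ is Carleson, so $W_{\tilde h,\psi}$ is bounded on $A^2$; Theorem~\ref{thm:a2} then gives that $W_{\tilde h,\psi}$ is bounded below on $A^2$, and equivalently $\mu^p_{h,\psi}$ satisfies the reverse Carleson condition. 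The $A^p$ equivalence noted at the start of this section then yields that $W_{h,\psi}$ is bounded below on $A^p$.

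The hardest step is the reduction to non-vanishing $h$. Unlike the Hardy case, where the inner-outer decomposition gives $|h|=|u|$ on $\TT$ and makes the reduction tautological, the factor $|B|^p$ genuinely alters the measure on $\DD$. The finite-zeros hypothesis is what allows me to control this factor uniformly, via the fact that $|B|\to 1$ uniformly near $\TT$ and that $\{|B|<\epsilon\}$ is a small compact subset of $\DD$ for small $\epsilon$; this is the sole reason the theorem is stated under that assumption.
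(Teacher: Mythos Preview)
Your Step~2 is essentially identical to the paper's: define $\tilde h=h^{p/2}$, observe that $|\tl_w|^p=|\tilde k_w|^2$ and $\mu^p_{h,\psi}=\mu^2_{\tilde h,\psi}$, apply Theorem~\ref{thm:a2}, and then use the $p$-independence of Luecking's reverse Carleson criterion to transfer boundedness below back to $A^p$.

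The reduction in Step~1, however, is carried out differently. The paper divides out the zeros of $h$ by the $A^p$ \emph{contractive divisor} $G$ of Hedenmalm and Duren--Khavinson--Shapiro--Sundberg. For a finite zero set, $G$ extends analytically across $\TT$ and is therefore bounded; together with the contractive property $\|f/G\|_{A^p}\le\|f\|_{A^p}$ for $f$ vanishing on the zero set, this gives the two-sided comparison $\|W_{h/G,\psi}f\|\le\|W_{h,\psi}f\|\le\|G\|_\infty\|W_{h/G,\psi}f\|$ in one line. Your approach factors $h=Bh_1$ through a finite Blaschke product and uses the elementary facts that $|B|\to1$ uniformly near $\TT$ and that $A(\{|B|<\epsilon\})\to0$ to control the compact ``bad'' set via pointwise Bergman estimates. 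This is correct (with the minor care that one fixes some $\epsilon_0<1$ first so that the compact set on which $h_1$ and $f\circ\psi$ are to be bounded stays inside a fixed compact subset of $\DD$), and it has the advantage of being entirely self-contained, avoiding the contractive-divisor machinery. The paper's route, on the other hand, is shorter and makes the obstruction in the infinite-zeros case transparent: it is precisely the possible unboundedness of the contractive divisor $G$ that blocks the argument, which is the open question noted after the theorem.
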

\beginpf
As in the proof of Theorem \ref{thm:othertests} we may assume without loss of generality
that $h$ has no zeros. This time we divide out its zeros by a contractive divisor $G$,
as in \cite{DKSS93,hedenmalm}. Since $G$ is analytic on a neighbourhood of the
disc, it is also bounded, and thus plays the same role as the inner function $\theta$ did
in the Hardy space. That is, $W_{h,\psi}$ and $W_{h/G,\psi}$ are both bounded below (or not)
together.

The  condition on test functions may be written as
\[
\int_\DD |h(z)|^p |\tl_w(\psi(z))|^p \, dA(z) \ge C^p,
\]
and since $h$ is non-vanishing we may write $h(z)^p = \tilde h(z)^2$, where $\tilde h \in A^2$.
Thus
\[
\int_\DD |\tilde h(z)|^2 |\tilde k_w(\psi(z))|^2 \, dA(z) \ge C^p
\]
for all $w \in \DD$, and so the weighted composition
operator $W_{\tilde h, \psi}$ is bounded below on $A^2$, by Theorem  \ref{thm:a2}.

Looking  at \eqref{eq:revap1} and \eqref{eq:revap2},  noting that
$\mu_{h,\psi}^p = \mu_{\tilde h,\psi}^2$, and observing that Luecking's
condition for a reverse Carleson measure \cite[Cor.~1]{luecking} is independent of $p$, we see
that
 $W_{h,\psi}$ is bounded below on $A^p$.
\endpf

It would be interesting to know whether Theorem \ref{thm:finmanyzeros} extends to the
case when $h$ has infinitely-many zeros, and the corresponding contractive divisor $G$
may not be bounded.


\end{document}